\numberwithin{equation}{section}
\newtheorem{them}{Theorem}[section]
\newtheorem{con}{Conjecture}[section]
\newtheorem{cor}[them]{Corollary}
\newtheorem{pro}[them]{Proposition}
\theoremstyle{definition}
\newtheorem{defi}[them]{Definition}
\newtheorem{rem}[them]{Remark}
\newtheorem{ex}[them]{Example}
\newcommand{\X}{\mathcal{X}}
\newcommand{\C}{\mathbb{C}}
\newcommand{\R}{\mathbb{R}}
\newcommand{\Sp}{\mathbb{S}}
\newcommand{\N}{\mathbb{N}}
\newcommand{\Z}{\mathbb{Z}}
\renewcommand{\P}{\mathbb{P}}
\renewcommand{\X}{\mathfrak{X}}
\newcommand{\M}{\mathcal{M}}
\renewcommand{\d}{\mathrm{d}}
\newcommand{\1}{\mathds{1}}%THERE IS A PROBLEMS WITH THIS COMMAND%%%%%%%%%%%%%%%
\begin{document}

\allowdisplaybreaks

\renewcommand{\thefootnote}{}

\renewcommand{\PaperNumber}{009}

\FirstPageHeading

\ShortArticleName{Taking Music Seriously}

\ArticleName{Taking Music Seriously: on the Dynamics\\ of `Mathemusical' Research with a Focus\\ on Hexachordal Theorems\footnote{This paper is a~contribution to the Special Issue on Differential Geometry Inspired by Mathematical Physics in honor of Jean-Pierre Bourguignon for his 75th birthday. The~full collection is available at \href{https://www.emis.de/journals/SIGMA/Bourguignon.html}{https://www.emis.de/journals/SIGMA/Bourguignon.html}}}

\Author{Moreno ANDREATTA~$^{\rm a}$, Corentin GUICHAOUA~$^{\rm b}$ and Nicolas JUILLET~$^{\rm c}$}

\AuthorNameForHeading{M.~Andreatta, C.~Guichaoua and N.~Juillet}

\Address{$^{\rm a)}$~IRMA-CNRS-CREAA-University of Strasbourg and IRCAM, Paris, France}
\EmailD{\href{mailto:andreatta@math.unistra.fr}{andreatta@math.unistra.fr}}
\URLaddressD{\url{http://repmus.ircam.fr/moreno-en/home}}

\Address{$^{\rm b)}$~Independent Researcher, SMIR Project, France}
\EmailD{\href{mailto:corentin.guichaoua@gmail.com}{corentin.guichaoua@gmail.com}}

\Address{$^{\rm c)}$~IRIMAS, Universit\'e de Haute-Alsace, France}
\EmailD{\href{mailto:nicolas.juillet@uha.fr}{nicolas.juillet@uha.fr}}
\URLaddressD{\url{https://juillet.perso.math.cnrs.fr/}}

\ArticleDates{Received July 01, 2023, in final form January 11, 2024; Published online January 25, 2024}

\Abstract{After presenting the general framework of `mathemusical' dynamics, we focus on one music-theoretical problem concerning a special case of homometry theory applied to music composition, namely Milton Babbitt's hexachordal theorem. We briefly discuss some historical aspects of homometric structures and their ramifications in crystallography, spectral analysis and music composition via the construction of rhythmic canons tiling the integer line. We then present the probabilistic generalization of Babbitt's result we recently introduced in a paper entitled ``New hexachordal theorems in metric spaces with probability measure'' and illustrate the new approach with original constructions and examples.}

\Keywords{mathemusical research; homometric sets; distance-sets; metric measure spaces; ball volume; scalar curvature; Patterson function}

\Classification{00A65; 28A75; 05C12; 60D05}

\renewcommand{\thefootnote}{\arabic{footnote}}
\setcounter{footnote}{0}

\section{An introduction to ``mathemusical'' research}

Despite a very long historical relationship between mathematics and music, which went through the centuries from Pythagoras to nowadays \cite{Fauvel2006}, the real interest of the community of ``working mathematicians'' in this research field is a relatively recent phenomenon. One may surely find the germs of a new way of looking at the relations between music and (modern) mathematics in the second half of the twentieth century, with some remarkable figures of mathematically-inclined music-theorists and composers such as Iannis Xenakis (in Europe) and Milton Babbitt (in the United States). Both played an important role in the emergence of an \textit{algebraic} approach in the formalization of twentieth-century music theory and composition, which constitutes a~crucial moment in the development of a systematic orientation in contemporary musicology, both in the European and in the American tradition \cite{Andreatta2008}. As widely discussed in another contribution of this special issue,\footnote{See the eleven variations on the maths and music theme by Fran\c{c}ois Nicolas, published in the present volume.} Jean-Pierre Bourguignon has undoubtedly given a major contribution in the progressive institutionalization process of this research domain. He was in fact one of the main catalyzers of the Diderot forum on mathematics and music, which took place in 1999 simultaneously in Paris, Vienna and Lisbon and that was organized under the auspices of the European Mathematical Society. The conference, as well as the volume that was published subsequently by Springer \cite{Assayag}, represented a real milestone in the change of perspective by mathematicians on music and mathematics as a truly interdisciplinary research field. This led in 2007 to the constitution of an international society (the ``Society for Mathematics and Computation in Music'')\footnote{See \url{http://www.smcm-net.info}.} and the launch of the first mathematical journal devoted to maths and music research (\textit{J.~Math. Music}, edited by Taylor and Francis).\footnote{See \url{https://www.tandfonline.com/journals/tmam20}.} The recognition of the relevant mathematical dimension of the research carried on in this domain enabled the inscription in 2010 of ``Mathematics and Music'' as an official topic within the mathematics subject classification under the code~00A65.

\subsection{Around the domain of structural music information research}

In contrast to statistical methods and signal-based approaches currently employed in different domains of computational musicology and music information retrieval,\footnote{See \cite{Meredith2016} for a survey of the different approaches in computational music analysis. The reader may refer to~\cite{Peeters2013} for a roadmap in the domain of music information research in both symbolic and signal-based approaches and edited by the MIReS (Music Information Research) Consortium.} what we originally suggested to call ``mathemusical'' research \cite{Andreatta2003} shows the interest of introducing a structural perspective into the multidisciplinary field of music information research making use of advanced mathematics. The research presented in this paper is carried on, in particular, within the ongoing SMIR Project devoted to structural modern mathematics applied to the field of music information research. This project was initially supported by the University of Strasbourg Institute for Advanced Study (USIAS) and is currently hosted as a permanent transversal axis at the \textit{Institut de Recherche Math\'ematique Avanc\'ee} (IRMA) in collaboration with the CREAA (\textit{Centre de Recherche et d'Exp\'erimentation sur l'Acte Artistique}) and the \textit{Institut de Recherche et Coordination Acoustique/Musique} (IRCAM) in Paris.\footnote{For a detailed description of the SMIR project, together with the list of participants and scientific production, including academic work at Master and PhD levels, see \url{http://repmus.ircam.fr/moreno/smir}.}

``Mathemusical'' research, as carried on within the SMIR Project, is based on the interplay between several mathematical disciplines: algebra, combinatorics, geometry, topology, category theory, statistics and probability theory. It opens promising perspectives on important prevailing challenges in the domain of relations between music and mathematics, such as the automatic classification of musical styles or the solution of open mathematical conjectures. It therefore asks for new collaborations between mathematicians, computer scientists, musicologists, and composers.

As we suggest with the present article, music surely deserves to be taken seriously by mathematicians since it provides a number of difficult theoretical problems, some of which can even be the starting point for approaching, in a new way, open mathematical conjectures. Among the music-theoretical problems showing a remarkable link with interesting mathematical constructions and sometimes open conjectures, one may quote the following ones that still constitute active research axes in the maths and music domain. According to the idea of ``mathemusical'' dynamics we first indicate in the list the original musical problem followed by the mathematical theory in which such a problem can be formalized:\footnote{Obviously, the list does not entirely cover the domain of music-theoretical problems that can be interesting to approach from a mathematical perspective. The reader may find some more examples in \textit{J.~Math.\ Music} as well as in the \textit{Proceedings} of the Mathematics and Computation in Music Conferences that have been regularly edited by Springer.}
\begin{itemize}\itemsep=0pt
\item Tiling rhythmic canons and their Fourier-based characterization (with a connection to Fuglede's spectral conjecture). See \cite{Amiot2016, Lanzarotto2022}.
\item Z-relation in music theory as connected to the study of homometric structures in crystallography (and their extensions via the notion of $k$-deck). See \cite{mandereau2011b,mandereau2011a};
\item Transformational music theory and the functorial representation of poly-Klumpenhouwer networks as categorical graph-theoretical constructions. See \cite{Popoff2018}.
\item Neo-Riemannian music analysis, spatial computing and lattice-based representations of musical structures (with tools derived from \textit{formal concept analysis} and \textit{mathematical morphology}). See \cite{Freund2015, Lascabettes2022}.
\item Diatonic theory, maximally even sets and the discrete Fourier transform \cite{AmiotJMM}.
\item Periodic musical sequences and finite difference calculus taking values in generic finite groups \cite{Andreatta2001}.
\item Chord and rhythms classification in music composition expressed in terms of combinatorial block-designs \cite{Jedrzejewski2009}.
\item Voice-leading theory and the geometry of orbifolds \cite{Tymoczko2011}.
\end{itemize}

As a pedagogical illustration of the ``mathemusical'' dynamics, we will briefly present in the final part of this introductory section the two first research topics from the previous list, insisting on their mutual relations. Interestingly, these two topics, as most of the problems listed before, are in fact deeply interrelated which shows the existence of a remarkable interplay between algebraic formalization and geometric representations of musical structures and processes. From a more philosophical perspective, this interplay provides a further example of the duality between ``temporal'' and ``spatial'' constructions which are surely two of the most fundamental ingredients of music. To quote, for example, Alain Connes' opinion, as expressed in a conversation with composer Pierre Boulez devoted to creativity in mathematics and music: ``Concerning music, it takes place in time, like algebra. In mathematics, there is this fundamental duality between, on the one hand, geometry -- which corresponds to the visual arts, an immediate intuition -- and on the other hand algebra. This is not visual, it has a temporality. This fits in time, it is a computation, something that is very close to the language, and which has its diabolical precision. [\dots] And one only perceives the development of algebra through music''~\cite{Connes}.

The crucial point is the existence of a permanent feedback loop between musical thought, mathematical formalisation and computational modeling, which constitutes the heart of a~new kind of dynamics between music and mathematics via computer-science. This dynamic back-and-forth is illustrated in the diagram of Figure~\ref{fig:diagram1} which clearly shows the place of the three main disciplines (music, computer science and mathematics) as well as the contribution of two additional fields within contemporary ``mathemusical'' research, namely epistemology and cognition.\footnote{See \cite{Andreatta2014} for a more philosophical account of the research in music and mathematics also including an epistemological discussion and some reflexions about the role of cognitive science with respect to mathemusical dynamics.}

\begin{figure}[t]\centering
\includegraphics[width=5cm]{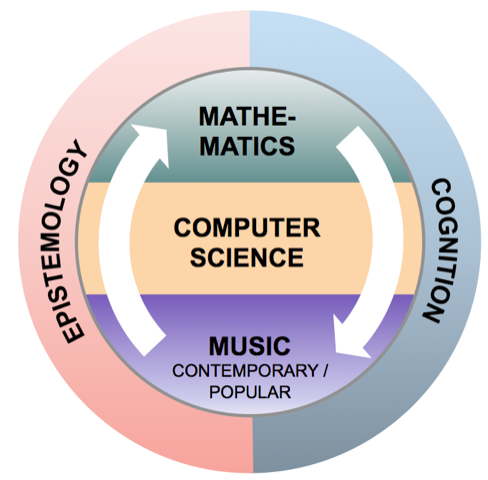}
\caption{A diagram showing the underlying ``mathemusical'' dynamics between music and mathematics through computer-science, also including some epistemological and cognitive aspects.}\label{fig:diagram1}
\end{figure}

Moreover, the mathemusical dynamics (from music to mathematics to music via computer science and the possible epistemological and cognitive implications) constitutes a radical change of perspective with respect to the traditional application of mathematics in the musical domain. Mathemusical problems are characterized by the fact that settling them in an appropriate mathematical framework not only gives rise to new musical applications, but also paves the way to new mathematical constructions. By carefully analyzing the various steps of this mathemusical dynamics, one observes that it can be decomposed into the following three stages (see Figure~\ref{fig:diagram2}):
\begin{itemize}\itemsep=0pt
\item \textit{Formalization}: the initial music-theoretical problem is approached by means of a combination of mathematical tools enabling its formalization and revealing its computational character;
\item\textit{Generalization}: the formalized problem is generalized by using a panoply of mathematical constructions, ranging from abstract algebra to topology and category theory and leading to general statements (or theorems);
\item \textit{Application}: once a generalized result has been obtained, it can be applied to music by focusing on one of the three main aspects, i.e., the theoretical, the analytical and the compositional one.
\end{itemize}

This decomposition of the mathemusical dynamics, together with the triple perspective of the possible musical applications of a general result is shown in Figure~\ref{fig:diagram2}.

\begin{figure}[h!]\centering
\includegraphics[width=8cm]{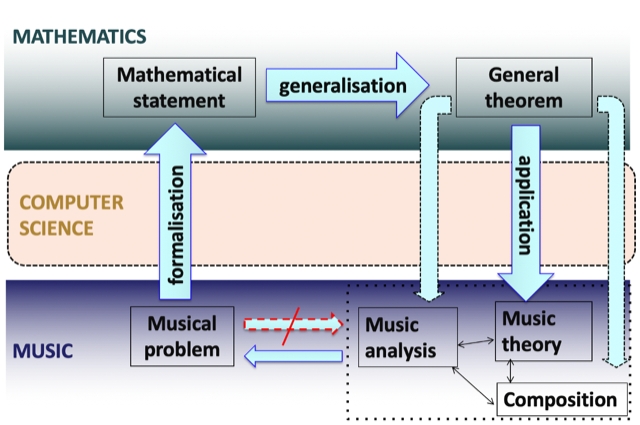}
\caption{A more detailed perspective on the ``mathemusical'' research diagram as presented in Figure~\ref{fig:diagram1}, with the indication of the three main ingredients of the dynamics (namely ``formalization'', ``generalization'' and ``application'').}\label{fig:diagram2}
\end{figure}

It is this fruitful double movement, from music to mathematics and backwards, which is at the heart of a research activity, where computer science is positioned in the middle of this feedback loop, as an interface for connecting the musical and mathematical domains. We simplify the picture by considering a homogeneous intermediate level corresponding to the place occupied by computer science with respect to music theoretical and mathematical research. Through a more careful analysis of the different music theoretical problems, one may nevertheless distinguish the cases in which the computer-aided models are directly built in the formalization process such as problems asking for a computational exploration of the solution space, more than a~search of a~general underlying mathematical theory. To this family belong, for example, typical enumeration problems such as the classification of all possible Hamiltonian paths and cycles in some graph-theoretical musical spaces, such as the Tonnetz and their multi-dimensional extensions~\cite{Andreatta-Baroin2016}. Conversely, there are cases in which the computational models are built starting from some general algebraic results, as in the case of the construction of tiling rhythmic canons that we briefly recall by stressing their link with two fields of mathematics: homometry and spectral theory.

\subsection{Approaching spectral theory and homometry musically}
Applying advanced mathematics to the field of computational musicology is necessary, we believe, to successfully tackle difficult ``mathemusical'' problems which are linked to open conjectures in mathematics. This is the case of two major problems that have been the object of study in the last fifteen years and which can be approached in a new way by stressing their mutual theoretical and computational interplay: the construction of tiling rhythmic canons and the classification of homometric musical structures.

Tiling canons are special rhythmic canons having the property of tiling the time axis by temporal translation of a given rhythmic pattern. Algebraically, it corresponds to the decomposition of a cyclic group of order \textit{n} into a direct sum of two subsets (which can be or not periodic). See~\cite{Andreatta2009} and \cite{Amiot2011} for two journal special issues devoted to the theoretical and computational aspects of the construction of tiling rhythmic canons.

Concerning homometry theory, it is a field in mathematical combinatorics that originates in crystallography, where as originally shown by Arthur Lindo Patterson \cite{patterson39, patterson44} one may find crystals having the same X-ray spectrum without being the same. In other words, these crystal structures are undistinguishable by spectroscopic analysis although the is no isometric group transforming one structure into the other one. Analogously, music composition naturally provides examples of homometric structures in different domains such as harmony, melody or rhythm. By definition, two musical structures are \textit{homometric} if they share the same multiplicity of occurrence of distance between their elements. Since distances in music are expressed in terms of intervals, we can say that two musical structures are homometric if they have the same distributions of intervals without necessarily being equivalent up to elementary musical transformations such as transpositions (i.e., translations) or inversions (i.e., axial symmetries) \cite{mandereau2011b,mandereau2011a}.

The deep connection between tiling and homometry comes from the observation that considering a cyclic group $\Z_n$ and its decomposition into two non-periodic factors (i.e., two subsets of periodicity equal to~$n$), the tiling process constitutes a special case of an open conjecture in mathematics dating from the 1970s, namely Fuglede's spectral conjecture.

\subsubsection[A digression on Fuglede's spectral conjecture and its application to the discrete integer line]{A digression on Fuglede's spectral conjecture \\ and its application to the discrete integer line}

Interestingly, by going through the wide literature devoted to group factorization in the perspective of tiling problems in geometry, there is no evidence that all these problems can be related to the functional analysis domain and, more specifically, to an analytical problem concerning the commutativity of self-adjoint operators in the space $L^2(\Omega)$ of square-integrable functions defined on a domain $\Omega \subseteq \R^n$. This problem was raised by Bent Fuglede \cite{Fuglede1974} who was interested in the link between the spectrality of a domain $\Omega$ and the fact that it can tile the \textit{n}-dimensional Euclidean space by translation. More formally, a domain $\Omega \subseteq \R^n$ is \textit{spectral} if every function $f\in L^2(\Omega)$ can be represented in the following way: $f(x)=\sum f_k {\rm e}^{2\pi {\rm i} \lambda_k\cdot x}$, where $(\lambda_k)_{k\in \Z}$ is a~family of vectors and the exponentials ${\rm e}^{2\pi {\rm i}\lambda_k\cdot x}$ are mutually orthogonal maps.

In a more informal way, a domain $\Omega$ is spectral if it admits an orthonormal basis of complex exponentials, i.e., a Fourier decomposition. Fuglede's spectral conjecture can be stated in the following way.

\begin{con}[Fuglede]\label{con1}
A domain $\Omega \subset \R^n$ is spectral if and only if it tiles $\R^n$ by translation.
\end{con}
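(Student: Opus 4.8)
\medskip

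\noindent\emph{A proof proposal.} Since this is stated as a conjecture, no complete proof is available --- and, strictly, none should be expected: both implications are now known to fail for $n\ge 3$ (Tao; Matolcsi; Kolountzakis--Matolcsi), while the cases $n=1$ and $n=2$ remain open. What one can realistically plan is a proof of each implication under structural hypotheses (e.g.\ $\Omega$ convex, or the finite cyclic setting $\Z_n$ that underlies rhythmic canons), together with a precise diagnosis of where the general argument breaks. The plan is to split the equivalence into ``tiling $\Rightarrow$ spectral'' and ``spectral $\Rightarrow$ tiling'' and to attack each through the Fourier transform.

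First I would pass to the analytic reformulation. Normalizing $|\Omega|=1$, a discrete set $T$ tiles, $\Omega\oplus T=\R^n$, iff $\sum_{t\in T}\1_\Omega(x-t)=1$ almost everywhere, equivalently (Poisson-type summation) iff $\widehat{\1_\Omega}$ vanishes on $(\spt\widehat{\delta_T})\setminus\{0\}$ while $\widehat{\delta_T}$ carries the correct mass at the origin. Dually, a countable set $\Lambda$ yields an orthonormal family $\bigl\{\mathrm{e}^{2\pi\mathrm{i}\lambda\cdot x}\bigr\}_{\lambda\in\Lambda}$ in $L^2(\Omega)$ iff $\Lambda-\Lambda\subseteq\{0\}\cup Z$, where $Z=\bigl\{\xi:\widehat{\1_\Omega}(\xi)=0\bigr\}$, and this family is complete iff $\sum_{\lambda\in\Lambda}\bigl|\widehat{\1_\Omega}(\xi-\lambda)\bigr|^2\equiv 1$. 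Both sides of the desired equivalence are therefore governed by the single object $Z$, and the task becomes: turn a packing statement about $Z$ (a tiling, or an orthogonality relation) into the matching completeness statement.

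For ``tiling $\Rightarrow$ spectral'' I would extract a candidate spectrum $\Lambda$ from the algebraic structure of the complement $T$: the vanishing of $\widehat{\1_\Omega}$ on $\spt\widehat{\delta_T}\setminus\{0\}$ supplies orthogonality, and Parseval applied to the tiling identity supplies completeness at density $1$. In the cyclic case this is exactly the route through vanishing sums of roots of unity and the Coven--Meyerowitz conditions $(T1)$ and $(T2)$, with $(T1)$ holding automatically for tiles and providing the skeleton of $\Lambda$. For ``spectral $\Rightarrow$ tiling'' I would run the argument in reverse: from an orthonormal basis with frequency set $\Lambda$, Bessel's inequality forces the translates of (a dual copy of) $\Omega$ to pack $\R^n$ at density $1$, and completeness should promote this packing to a covering, hence to a tiling. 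The hard part will be precisely that promotion: a density-$1$ packing need not tile, and closing this gap is where the general Euclidean statement resists, where the cyclic implication genuinely needs the conjectural condition $(T2)$, and where the known counterexamples in dimension $\ge 3$ originate. Consequently I would not expect the full equivalence to close; the realistic outcome is to record the convex-body case and the $\Z_{p^n q^m}$-type cases (as in Iosevich--Katz--Tao, Lev--Matolcsi, {\L}aba--Londner) as the parts that can actually be proved.
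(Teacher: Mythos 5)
You have correctly recognized that this statement is an open conjecture: the paper offers no proof of it, and none exists --- it records exactly the status you describe (false for $n>2$ by Tao and Kolountzakis--Matolcsi, recently settled affirmatively for convex domains in all dimensions by Lev--Matolcsi, still open for $n=1,2$). Your factual summary is accurate, and your Fourier-analytic reduction to the zero set of $\widehat{\1_\Omega}$ is consistent with the only concrete piece of machinery the paper does state in this direction, namely the proposition that $\Z_n=A\oplus B$ if and only if $Z_A\cup Z_B=\Z_n\setminus\{0\}$ and $\#A\times\#B=n$, which is the discrete entry point (via Vuza canons and the Coven--Meyerowitz conditions) that the paper, like you, identifies as the realistic line of attack. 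There is nothing to correct; just note that since the paper only cites the conjecture as motivation, no proof was expected of you here.
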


Contributions by several mathematicians including Fields medalist Terence Tao \cite{Tao2004}, as well as Mate Matolcsi and Mihalis Kolountzakis \cite{Kolountzakis2010}, have shown that the conjecture is false in all dimension $n>2$. In contrast to what happens in convex domains, where the conjecture has very recently been proved true in all dimensions \cite{Lev2022}, it is therefore possible to find domains $\Omega$ which tile $\R^n$ with $n>2$ without admitting a spectrum. Surprisingly, the conjecture is still open in dimension 1 and 2. The case $n=1$, once restricted to the integer $\Z$ line, enables to make a fruitful link between the spectral conjecture and the tiling rhythmic canons. In order to understand this connection, we need to show that tiling rhythmic canons can be expressed in a natural way in spectral terms via (the set of the zeroes of) the discrete Fourier transform associated to a given subset $A\subseteq \Z$. First of all, note that, since any given finite subset $A\subseteq \Z$ tiling the integer line by translation always tiles with a given period,\footnote{This result has been shown independently by Nicolaas Govert de Bruijn \cite{Bruijn1950} and Gy\"{o}rgy Haj\'os \cite{Hajos1950} in the 1950s.} we are in fact factorizing~$\Z_n$ into a direct sum of two subsets, one of which is the set $A$. Formally, it exists a set $B$ such that $\Z_n =A\oplus B$. We can therefore define the discrete Fourier transform associated to a given subset~$A\subseteq \Z_n$ without loss of generality on the tiling process.

\begin{defi}
Let $A\subseteq \Z_n$ be a subset of $\Z_n$, then the discrete Fourier transform $F_A$ is the discrete Fourier transform of its characteristic function, i.e., the map from $\Z_n$ to $\C$ which sends every element $t\in \Z_n$ to \smash{$\sum {\rm e}^{\frac{-2\pi {\rm i}kt}{n}}$}, where the sum is done for all elements $k\in A$.
\end{defi}

We will denote with $Z_A$ the set $Z_A =\{t\in \Z_n \mid F_A(t)=0\}$ of the zeroes of $F_A$. We recall the following property which expresses the factorization of a cyclic group in terms of the zeroes of the Fourier transforms of the respective factors (see~\cite{Amiot2009} for a proof):

\begin{pro}
Let $A$, $B$ be two subsets of $\Z_n$. Then $\Z_n =A\oplus B$ if and only if $Z_A \cup Z_B = \Z_n \backslash \{0\}$ and $\#A \times \#B=n.$
\end{pro}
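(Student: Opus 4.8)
The plan is to translate the combinatorial direct-sum condition into an analytic one. Writing $\widehat f$ for the discrete Fourier transform of a function $f\colon\Z_n\to\C$, so that $\widehat{\1_A}=F_A$ with $F_A$ as above, I would first note that $\Z_n=A\oplus B$ holds if and only if the cyclic convolution of characteristic functions satisfies $\1_A\ast\1_B=\1_{\Z_n}$: indeed $(\1_A\ast\1_B)(x)=\#\{(a,b)\in A\times B : a+b=x \text{ in } \Z_n\}$ counts the representations of $x$ as a sum, and requiring this number to equal $1$ for every $x\in\Z_n$ is exactly the definition of the direct sum. One then applies the Fourier transform, which converts convolution into pointwise multiplication, so that this single functional equation splits into one scalar equation per frequency $t\in\Z_n$.

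Next I would record three elementary facts. (i) The convolution theorem $\widehat{\1_A\ast\1_B}(t)=F_A(t)\,F_B(t)$ for all $t$, obtained by expanding the sums and reindexing. (ii) The transform of the all-ones function: by the geometric-series identity, $\sum_{k\in\Z_n}{\rm e}^{-2\pi{\rm i}kt/n}$ equals $n$ for $t=0$ and $0$ otherwise, i.e.\ $\widehat{\1_{\Z_n}}=n\,\1_{\{0\}}$. (iii) At the origin $F_A(0)=\#A$ and $F_B(0)=\#B$, and $f\mapsto\widehat f$ is invertible, so two functions on $\Z_n$ with the same transform coincide.

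For the forward implication, assume $\Z_n=A\oplus B$. Applying the transform to $\1_A\ast\1_B=\1_{\Z_n}$ and using (i)--(ii) gives $F_A(t)\,F_B(t)=n\,\1_{\{0\}}(t)$ for every $t$. Evaluating at $t=0$ with (iii) yields $\#A\times\#B=n$; for $t\neq 0$ it gives $F_A(t)\,F_B(t)=0$, hence $F_A(t)=0$ or $F_B(t)=0$, i.e.\ $t\in Z_A\cup Z_B$, so $\Z_n\setminus\{0\}\subseteq Z_A\cup Z_B$. Since $\#A\times\#B=n\geq 1$ forces $A$ and $B$ to be nonempty, $F_A(0)=\#A\neq 0$ and $F_B(0)=\#B\neq 0$, so $0\notin Z_A\cup Z_B$; the reverse inclusion then holds and $Z_A\cup Z_B=\Z_n\setminus\{0\}$. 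Conversely, assuming $Z_A\cup Z_B=\Z_n\setminus\{0\}$ and $\#A\times\#B=n$: for $t\neq 0$ one of $F_A(t)$, $F_B(t)$ vanishes, so $F_A(t)\,F_B(t)=0$, while $F_A(0)\,F_B(0)=\#A\times\#B=n$; thus $F_A\cdot F_B$ agrees pointwise with $\widehat{\1_{\Z_n}}=n\,\1_{\{0\}}$, and by (i) together with the injectivity in (iii) we get $\1_A\ast\1_B=\1_{\Z_n}$, i.e.\ $\Z_n=A\oplus B$.

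No real difficulty is expected: the entire content is that the discrete Fourier transform diagonalizes cyclic convolution. The one point that will need care is upgrading the inclusion $\Z_n\setminus\{0\}\subseteq Z_A\cup Z_B$ to an equality, which is exactly where the nonvanishing of $F_A$ and $F_B$ at the origin -- equivalently, the nonemptiness of $A$ and $B$, guaranteed by $\#A\times\#B=n$ -- is used. This also explains why the cardinality clause cannot be dropped: for $n>1$ the choice $A=B=\Z_n$ gives $Z_A=Z_B=\Z_n\setminus\{0\}$, so the zero-set condition alone is satisfied while $\Z_n=A\oplus B$ plainly fails, and it is precisely $\#A\times\#B=n$ that rules this out.
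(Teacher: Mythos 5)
Your proof is correct and complete; note that the paper itself does not prove this proposition but defers to Amiot (2009), and the argument there is exactly the one you give, namely that the direct-sum condition is the convolution identity $\1_A\ast\1_B=\1_{\Z_n}$, which the discrete Fourier transform diagonalizes into the zero-set condition off the origin and the cardinality condition at the origin. Your remark on why $\#A\times\#B=n$ cannot be dropped is a nice addition but not a departure from that standard route.
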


Fuglede's conjecture (in dimension 1 and restricted to the discrete case) turns out to be naturally linked to homometry by virtue of the following observation: if a rhythmic pattern tiles the musical line by translation (i.e., it generates a tiling rhythmic canon), so does any rhythmic pattern that is homometric to the initial one. Moreover, from a spectral conjecture perspective, one only has to consider tiling canons associated to factorizations of a cyclic group as a direct sum of two non-periodic subsets (i.e., Vuza canons), since, as it has been shown by Emmanuel Amiot, all the other canons verify Fuglede's conjecture \cite{Amiot2016}. The interested reader can refer to Greta Lanzarotto's Ph.D.~Thesis \cite{Lanzarotto2022} for the most recent theoretical and computational account of Vuza Canons as potential candidates for approaching Fuglede's spectral conjecture as well as other open problems, such as Coven and Meyerowitz T2 conjecture.\footnote{See \cite{Coven1999}.} For a description of tiling canons as a key to approach open mathematical conjectures, see the survey \cite{Andreatta2015} as well as the special issue of \textit{J.~Math. Music} entitled ``Tiling problems in music"~\cite{Andreatta2009}.

\subsubsection{A special case of homometric structures: Babbitt's theorem}\label{sub:babbitt}
Milton Babbitt's hexachord theorem is surely one of the most celebrated results in ``mathemusical'' research. It expresses a property of invariance of the intervallic content of a chord with respect to its complement. Cyclic groups are traditionally used to represent musical structures such as chords, melodies or rhythms. In particular, a hexachord is a subset of $6$ notes over the~$12$ of the chromatic scale \[\Z/12\Z\equiv\{C, C\#, D,\ldots, B\}.\] Babbitt realized that the same \emph{intervals} appear with the same \emph{multiplicity} in the complementary hexachord $A^c$ as in the hexachord $A$. The hexachordal theorem simply states that $A$ and $A^c$ of measure $1/2$ are homometric. Note that this is a special case of a property -- known as Z-relation in the musical set-theoretical tradition~\cite{Forte1973} -- which can hold for sets $A$ and $B$ of equal cardinality and having the same interval content without being each other's complement. More generally, given a $n$-tone equal temperament which divides the octave into $n$ equal parts, it is possible to find $m$-tuples of sets of equal cardinality which are homometric. In the quarter-tone equal temperament, for example, there are three 12-tuples of sets of cardinality equal to 12. This is at the present the highest value of homometric sets that have been obtained by computational methods \cite{Wang2023}. Surprisingly, although many composers made use either consciously or unconsciously of Z-relation in their composition \cite{Goyette2012}, it must be noted that the question of the perceptual relevance of homometry still remains unanswered even in the simple case of couples of Z-related chords in the twelve-tone equal temperament. Nevertheless, the classification of homometric structures of different cardinalities within a given cyclic group $\Z_n$, i.e., generalized Z-related sets -- is an interesting combinatorial problem that constitutes an active field of research in computational musicology and micro-tonal composition \cite{Jedrzejewski2013, Wang2023}.

Whereas some examples of homometric structures can be found in the research carried on in the 1940s in the field of crystallography (see \cite{buerger76, patterson44, rosenblatt82}), the American music-theorist and composer Milton Babbitt made systematic use of Z-related musical structures within his serial technique, in which he applied the hexachordal theorem to the twelve-tone equal temperament~$\Z/12\Z$~\cite{babbitt1955}. Since Babbitt's original formulation \cite{babbitt1955} and its first complete proof by Ralph Hartzler Fox \cite{fox1964}, the hexachordal theorem has been discussed, reproved and sometimes generalized by several authors including David Lewin \cite{lewin1959}, Howard J.~Wilcox \cite{wilcox1982}, Steven K.~Blau~\cite{blau99}, Daniele Ghisi \cite{ghisi2006}, Emmanuel Amiot \cite{amiot2006} and Brian J.~McCartin \cite{mccartin2016}. One also finds a full characterization of simple graphs exhibiting the hexachordal property in \cite{AG01} by T.A.~Althuis and F.~G\"{o}bel. The hexachordal property has also been studied by David Lewin in \cite{lewin1987} within the so-called \emph{transformational music theory} and in particular in the context of (transformation) groups $T$ acting on a musical space $S$ in a simply transitive way (the so-called \textit{generalized interval systems}).
The uniquely determined group element mapping $x$ to $y$ is called \emph{interval} and denoted by $\mathrm{Int}(x,y)$. By choosing $e\in S$ as a reference, we can identify $T$ with $S$ through \[x\in S\mapsto \mathrm{Int}(e,x)\in T.\] In this way, the group action of $\mathrm{Int}(x,y)$ is identified with the left product by $z\mapsto \big(yx^{-1}\big)z$. The triplet $(S,T,\mathrm{Int})$ was called \emph{generalized interval system} or \emph{GIS} by David Lewin. This is the language for the proof of the hexachordal theorem for locally compact groups presented in~\cite{mandereau2011b,mandereau2011a}.

In the remaining part of this article, we discuss the new general framework that we introduced in the short companion paper \cite{Andreatta2023}, where the statements and their proofs are given in the framework of metric spaces with a probability measure and we illustrate our generalized hexachordal theorems with original constructions and examples. It must be observed that the approach we describe in this paper does not entirely fit within the previous diagram describing what we called the ``mathemusical'' dynamics, since there is no need, in general, to go through a computer-aided model to connect the musical original problem and the final mathematical result. Moreover, these constructions remain, at the present, highly speculative and the generalized results still need to find the appropriate musical interpretation which is far beyond the scope of the present study.

\section{Generalizing Babbitt's hexachordal theorem}
In our recent contribution \cite[Theorem 1.3]{Andreatta2023}, we give a full characterization of the spaces that host the hexachordal property. There the word \emph{interval} is given a metric meaning similar to~\cite{AG01} and the \emph{multiplicity} is measured through a new probabilistic approach. The heart of the very short proof is the same as \cite{fox1964}. We also generalize our result to more symbolic spaces \cite[Theorems 4.2 and 4.5]{Andreatta2023}. These results are recalled in Section \ref{sub:recap}.

\subsection[Naive illustration of the hexachordal phenomenon and heuristic proof in the case of the sphere Sp\^{}2]{Naive illustration of the hexachordal phenomenon \\ and heuristic proof in the case of the sphere $\boldsymbol{\Sp^2}$}

Among the closest extensions of the original hexachordal theorem on the discrete circle $\Z/12\Z$ are the ones to the continuous circle $\Sp^1$ and to the spheres $\Sp^3$ and $\Sp^7$ \cite{ballinger2009,mandereau2011b, mandereau2011a}. Our recent paper \cite{Andreatta2023} adds the other spheres $\Sp^d$ to the list. Since it speaks a lot to the inhabitants of the Earth, in this paragraph we would like to naively illustrate the hexachordal property on $\Sp^2$ and suggest a statistical proof for it. Our discussion should also be designed for non mathematician readers. To describe a set $A\subset \Sp^2$, one can look at the mean distance between two points picked randomly from $A$, where we assume that $\Sp^2$ is equipped the surface measure $\mu$ and the chord distance $d$. By mean distance, we mean
\[M_1(A)=\mu(A)^{-2}\iint_{A\times A} d(x,y) \d\mu(x)\d\mu(y)\]
the value corresponding to $p=1$ in the range of the power mean distances $(M_p(A))_{p>0}$, where
\begin{equation*}
M_p(A) := \left(\frac1{\mu(A)^2}\iint_{A\times A}d(x,y)^p\, \d\mu(x)\d\mu(y)\right)^{1/p}.
\end{equation*}
It is clear that rotating $A$ on $\Sp^2$ does not modify $M_1(A)$. To state the obvious the other power mean distances -- as for instance the quadratic mean distance $M_2(A)$ -- are also conserved after rotation. Finally, the (essential) diameter of $A$ is obviously conserved -- besides the fact it is $\lim_{p\to \infty}M_p(A)=\sup_p M_p(A)$. Nothing surprising in all that: the set $A$ is ``the same'' before and after rotation.

As we have proved in \cite{Andreatta2023}, if $\mu(A)=\mu\big(\Sp^2\big)/2$ another -- this time nontrivial -- operation conserves $M_1(A)$ and any other power mean {$M_p(A)$}, namely the complementary map
\begin{align*}
 A\mapsto A^c:=\Sp^2\setminus A.
\end{align*}
This invariance by the complementary map is the signature of the hexachordal theorem. Recall from Section~\ref{sub:babbitt} that we are looking for invariance of the multiplicity of the intervals. The proper notion for this is the one of \emph{distribution} (or \emph{law}) of the random distance between two independent points. Recall that the law of a random variable $X\colon \Omega\to F$ from a probability space~$(\Omega,\mathcal{A},\P)$ to a measured space $(F,\mathcal{B})$ is a probability measure $\P^X$ on $F$ defined by~${\P^{X}=\P\circ X^{-1}}$. Concretely $\P^X(A)$ represents the probability for $X$ to be in $A$. By definition, it is \[\P\big(X^{-1}(A)\big)=\P(\{\omega\in \Omega\colon X(\omega)\in A\})\] and is usually denoted by $\P(X\in A)$. Our proof of this invariance in \cite{Andreatta2023} is very basic~-- see below for the heuristic~-- and is similar to the one of Fox~\cite{fox1964}. Note that the invariance of the distribution implies the one of the power moments. The reason why we first mentioned the invariance of $M_p$ is that for many readers mean distances are more intuitive than probability distribution. However, notice that in the special case of $\Sp^2$, where the diameter is finite both are equivalent since according to the theory of the \emph{Hausdorff moment problem} the distribution of the random distance is uniquely characterized by the power means sequence $(M_p(A))_{p\in \mathbb{N}}$. To give another concrete and, we think,illustration of what distribution invariance implies one can consider $A$ to be the set of points with latitude between $-30^\circ$ and $30^\circ$, as illustrated in Figure~\ref{fig:sphere} (in white). We have the following application: the probability that the random distance is smaller than $\sqrt{2}$ (the distance between the poles and the equator) is the same for $A$ and $A^c$. While with simple geometric considerations on the set $A^c$ one proves that this probability is~$1/2$, a direct computation on~$A$ may first appear out of reach.

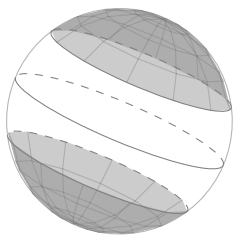
\begin{figure}[t]\centering
\def\r{1.5}
\tdplotsetmaincoords{80}{125}
\begin{tikzpicture}[tdplot_main_coords]
 \begin{scope}[thin,black!30,rotate=-23]
 \draw[tdplot_screen_coords,rotate=-23] (0,0,0) circle (\r);
 %\tdplotCsDrawLatCircle{\r}{0}
 \end{scope}
 \begin{scope}[gray]
 \tdplotCsDrawLatCircle[rotate=-23]{\r}{30}
 \tdplotCsDrawLatCircle[rotate=-23]{\r}{-30}
 \tdplotCsDrawLatCircle[rotate=-23]{\r}{0}
 \end{scope}
\begin{scope}[opacity=0.4,rotate=-23]
\tdplotsetpolarplotrange{0}{60}{0}{360}
\tdplotsphericalsurfaceplot{24}{12}%
{\r}{gray}{gray}{}{}{}
\tdplotsetpolarplotrange{120}{180}{0}{360}
\tdplotsphericalsurfaceplot{24}{12}%
{\r}{gray}{gray}{}{}{}
\end{scope}
 \end{tikzpicture}
\caption{Two points randomly picked in the bright region of the sphere have distance distributed equally as the one between points picked in the dark region (made of two caps).}\label{fig:sphere}
\end{figure}

Before giving a heuristic proof of the $\Sp^2$ case, let us stress the relevance of generalizing the hexachord theorem to this specific case. As discussed in \cite{mandereau2011a}, the case of the sphere $\Sp^2$ is a~challenging one with respect to traditional music-theoretical constructions since David Lewin's notion of generalized interval systems (GIS) and related interval content are meaningless. In fact, as the authors recalled, unlike the continuous circle $\Sp^1$and the spheres $\Sp^3$ and $\Sp^7$, there is no group structure and a Haar measure which can be provided on the sphere $\Sp^2$ which will be compatible with its natural topology. Our general framework enables to overcome this difficulty but at the same time it asks for new musical interpretations that cannot be based anymore on well-known music-theoretical constructions such the GIS.

Let $(x_1,y_1),\ldots,(x_N,y_N)$ be a large sample of pairs of random points from $\Sp^2$.
Intuition (backed up by theorems such as the law of large numbers) tells us that we can estimate~$M_p(A)$ and $M_p(A^c)$ as well as other metric quantities depending on the random distance by sorting pairs~$(x_k,y_k)$ such that $x_k\in A$ and $y_k\in A$ on~the one side and pairs satisfying $(x_k,y_k)\in (A^c)^2$ on the other side. Note that the expected size of these subsamples is~$N/4$. We claimed that the distribution of the distance is conserved by $A\mapsto A^c$ so our two samples $S_1= \big\{d(x_k,y_k)\colon (x_k,y_k)\in A^2\big\}$ and $S_2=\big\{d(x_k,y_k)\colon (x_k,y_k)\in (A^c)^2\big\}$ should have the same statistical aspect when $N$ tends to infinity. The deep reason for this phenomenon is only revealed once a third sample is added to the two others simultaneously, namely $S_3=\{d(x_k,y_k)\colon (x_k,y_k)\in A\times A^c\}$. Doing this, the first sample $S_1$ becomes $S_1\cup S_3$ that collects the distances $d(x_k,y_k)$ with $x_k\in A$ and~$y_k\in A\cup A^c=\Sp^2$~-- i.e., there is no restriction on $y_k$~-- whereas $S_2$ becomes $S_2\cup S_3$, where the pairs $(x_k,y_k)$ are in $\Sp^2\times A^c$ -- again one point, here $x_k$, is free. Now it appears that in both cases we are considering the typical random distance to one given point of the sphere. The fact that this point, $x_k$ (respectively $y_k$) is in $A$ (respectively $A^c$) has no incidence on the random distance. Therefore, the two augmented samples have the same properties (up to variations due to the sampling) and since we added the same sample to both, so do the initial samples. This concludes our heuristic proof.

In \cite{Andreatta2023}, we directly implement this strategy with probability measures in place of random samples and metric spaces more general than $\Sp^2$. One can check above that the geometric property of $\Sp^2$ that was important for the proof and this scheme of proof is that the random distance to a fixed point does not depend of this point. It is stated in \cite[Definition 1.2]{Andreatta2023} in purely geometric terms through the constant volume condition (CVC).

 \begin{defi}[constant volume condition]\label{def:cgc} A metric measure space $(\X,d,\mu)$ is said to satisfy the \emph{constant volume condition} if there exists a function $\rho$ on $[0,\infty)$ such that for any center $x\in \X$ and radius $r\in[0,\infty)$ the closed ball $\mathcal{B}(x,r)=\{y\in \X\colon d(x,y)\leq r\}$ has measure $\rho(r)$. In an equivalent way,
 \begin{align}
\forall x,y\in \X, \ \forall r\geq 0, \qquad\mu(\mathcal{B}(x,r))=\mu(\mathcal{B}(y,r)).\tag{CVC}
 \end{align}
\end{defi}
In the following, we sometimes call \emph{CVC space} a metric probability space which satisfies the constant volume condition.

\subsection{Statement of three generalized hexachordal theorems}\label{sub:statements}\label{sub:recap}

We recall the statement of the three theorems we proved in \cite{Andreatta2023}.

\begin{them}[hexachordal theorem for metric probability spaces]\label{thm1}
Let $(\X,d,\mu)$ be a metric probability space. Assume that it satisfies the constant volume condition. Then for every Borel set $A$ of $\mu$-measure $1/2$, with notation $A^c=\X\setminus A$ one has
\begin{align}\label{eq:ht}
\mu^{2}\big\{(x,y)\in A^2\colon d(x,y)\in E \big\}=\mu^{2}\big\{(x,y)\in (A^c)^2\colon d(x,y)\in E \big\}\tag{Hex}
\end{align}
for every open subset $E\subset [0,\infty)$, where $\mu^2$ is the product measure $\mu\times \mu$ used for the $($measurable$)$ sets of pairs $(x,y)\in\X^2$.
\end{them}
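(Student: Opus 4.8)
The plan is to follow the heuristic "three-sample" argument from the previous section and make it rigorous by passing to indicator functions and Fubini's theorem, using the constant volume condition (CVC) exactly once, in the form that the distance to a fixed point has a law independent of that point. Fix an open set $E\subseteq[0,\infty)$ and write $\Delta_E=\{(x,y)\in\X^2\colon d(x,y)\in E\}$; this is a Borel subset of $\X^2$ since $d$ is continuous and $E$ is open. For a Borel set $S\subseteq\X$ define
\[
\Phi(S):=\mu^2\big(\Delta_E\cap (\X\times S)\big)=\int_\X \big(\mu\otimes\mu\big)\textrm{-slice}=\int_S \nu_x(E)\,\d\mu(x),
\]
where $\nu_x$ denotes the law on $[0,\infty)$ of the map $y\mapsto d(x,y)$ under $\mu$; the second equality is Fubini applied to the nonnegative measurable function $\1_{\Delta_E}$. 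By the CVC, $\mu(\mathcal B(x,r))=\rho(r)$ is independent of $x$, and since every open $E$ is a countable union of intervals whose measures under $\nu_x$ are determined by the function $\rho$ (e.g.\ $\nu_x([0,r])=\rho(r)$, hence $\nu_x((a,b])=\rho(b)-\rho(a)$ up to the usual care at endpoints), the measure $\nu_x$ on $[0,\infty)$ does not depend on $x$. Call this common value $\nu(E)$. Hence $\Phi(S)=\nu(E)\,\mu(S)$ for every Borel $S$.

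Now run the bookkeeping of the heuristic. Let $A$ be Borel with $\mu(A)=1/2$ and $A^c=\X\setminus A$. Using additivity of $\mu^2$ on disjoint measurable sets and the decomposition $\X\times\X=(A\times A)\sqcup(A\times A^c)\sqcup(A^c\times A)\sqcup(A^c\times A^c)$, intersect everything with $\Delta_E$. Write $S_{11}=\mu^2(\Delta_E\cap(A\times A))$, and similarly $S_{12},S_{21},S_{22}$; by symmetry of $d$ we have $S_{12}=S_{21}$. From the slice identity above,
\[
S_{11}+S_{12}=\mu^2\big(\Delta_E\cap(A\times\X)\big)=\Phi(A)^{\!*}=\nu(E)\,\mu(A),
\]
where I mean the version of $\Phi$ that frees the \emph{second} coordinate (identical argument, or apply symmetry of $d$), and likewise $S_{22}+S_{12}=\nu(E)\,\mu(A^c)$. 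Since $\mu(A)=\mu(A^c)=1/2$, subtracting the two equations gives $S_{11}-S_{22}=0$, which is exactly \eqref{eq:ht}. Note the argument uses nothing about $E$ beyond measurability of $\Delta_E$, so in fact the statement holds for every Borel $E$, with the open case being a special instance.

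The only genuinely delicate point — the "main obstacle" — is justifying that $\nu_x$ is independent of $x$ as a \emph{measure} (not just on balls) and that $E\mapsto\nu_x(E)$ is measurable in $x$ so that Fubini applies; this is where CVC does its work. The clean way is to observe that $\nu_x$ is, by definition, the pushforward $\mu\circ(d(x,\cdot))^{-1}$, that $\1_{\Delta_E}(x,y)$ is jointly Borel measurable on $\X\times\X$ (continuity of $d$, openness of $E$), so Tonelli gives both the measurability of $x\mapsto\nu_x(E)$ and the iterated-integral identity for the nonnegative integrand; then CVC pins down $\nu_x([0,r])=\rho(r)$ for all $r$, and since the sets $[0,r]$ form a $\pi$-system generating the Borel $\sigma$-algebra of $[0,\infty)$, the finite measures $\nu_x$ agree for all $x$. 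Everything else is the elementary inclusion–exclusion already sketched in the heuristic discussion, and the hypothesis $\mu(A)=1/2$ enters precisely at the last subtraction.
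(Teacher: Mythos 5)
Your proof is correct and is precisely the rigorous implementation of the argument the paper itself sketches: the cross term $\mu^2(\Delta_E\cap(A\times A^c))$ is added to both sides so that one coordinate becomes free, CVC (via the $\pi$-system of closed balls) makes the law of the distance to a fixed point independent of that point, and the hypothesis $\mu(A)=1/2$ yields the cancellation. This matches the paper's heuristic ``three-sample'' proof and the strategy it attributes to Fox, so there is nothing further to add.
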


From the perspective of homometry theory, as interpreted within this new probabilistic framework, two subsets $A$ and $B$ are homometric if for every $E$
\begin{align*}
\mu^{2}\big(\big\{(x,y)\in A^2\colon d(x,y)\in E\big\}\big)=\mu^{2}\big(\big\{(x,y)\in B^2\colon d(x,y)\in E\big\}\big).
\end{align*}
Therefore, Theorem \ref{thm1} stated that subsets $A$ of mass $1/2$ are homometric to $A^c$.
This definition generalizes to homometric probability measure spaces as follows. The spaces $(\X,d_1,\mu)$ and~$(\mathfrak{Y},d_2,\nu)$ are homometric if
\begin{align*}
\mu^{2}\big(\big\{(x,y)\in \X^2\colon d_1(x,y)\in E\big\}\big)=\nu^{2}\big(\big\{(x,y)\in \mathfrak{Y}^2\colon d_2(x,y)\in E\big\}\big)
\end{align*}
for every Borel $E\subset \R$.

Now we recall the statements of two more general theorems \cite[Theorems 4.2 and 4.5]{Andreatta2023}. We need to recall that a \emph{balanced decomposition} $(\mu_0,\mu_1)$ of $\mu$ is a pair of probability measures such that $\mu_0+\mu_1=2\mu$ (see \cite[Definition~4.1]{Andreatta2023}).

\begin{them}[characterization for metric probability spaces]\label{them2}
Let $(\X,d,\mu)$ be a metric probability space. The following properties are equivalent:
\begin{enumerate}\setlength{\leftskip}{0.46cm}\itemsep=0pt
\item[$({\rm CVC}')$]There exists a set $\X'\subseteq \X$ of full measure for $\mu$ such that the constant volume condition is satisfied on $(\X',d,\mu)$.
 \item[$({\rm Ind})$] For any independent random variables $X$ and $Y$ of law $\mu$ and $D=d(X,Y)$, the random variables $X$, $Y$ and $D$ are pairwise independent.
\item[$({\rm Hex}')$] For every balanced decomposition $(\mu_0,\mu_1)$ of $\mu$ and two random triples $(X_i,Y_i,D_i)_{i=0,1}$, where for every $i$, $(X_i,Y_i)$ is a pair of independent random variables of law $\mu_i$ and $D_i=d(X_i,Y_i)$, we have the equality on distributions $\P(D_0\in \cdot)=\P(D_1\in \cdot)$.
\end{enumerate}
\end{them}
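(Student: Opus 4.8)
The plan is to prove the cycle $({\rm CVC}')\Rightarrow({\rm Ind})\Rightarrow({\rm Hex}')\Rightarrow({\rm CVC}')$. The unifying observation is that, for $X,Y$ independent of law $\mu$ and a fixed point $x$, the conditional law of $D=d(X,Y)$ given $X=x$ is the pushforward of $\mu$ by $y\mapsto d(x,y)$, a Borel measure on $[0,\infty)$ whose value on $[0,r]$ is exactly $\mu(\mathcal{B}(x,r))$; since a Borel measure on $[0,\infty)$ is determined by its values on the intervals $[0,r]$, condition $({\rm CVC}')$ says precisely that this conditional law is one and the same measure $\lambda$ for all $x$ in the full-measure set $\X'$. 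Hence $\Law(D\mid X)=\lambda$ a.s., so $\lambda=\Law(D)$ and $X\perp D$; by symmetry of $d$ the same reasoning applied to $Y$ gives $Y\perp D$, and $X\perp Y$ is the hypothesis. This proves $({\rm CVC}')\Rightarrow({\rm Ind})$.

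For $({\rm Ind})\Rightarrow({\rm Hex}')$ I would run the ``third sample'' argument of the heuristic in the language of pushforwards. Write $\nu_{ij}$ for the pushforward of $\mu_i\otimes\mu_j$ by $d$, so that $\P(D_i\in\cdot)=\nu_{ii}$ and $\nu_{01}=\nu_{10}$ by symmetry of $d$. Since $\mu_i\le 2\mu$ we have $\mu_i\ll\mu$, and by $({\rm Ind})$ the conditional law $\Law(d(X,y)\mid Y=y)$ equals the fixed measure $\lambda$ above for $\mu$-a.e.\ (hence $\mu_i$-a.e.)\ $y$; integrating against $\mu_i$ shows that the pushforward of $\mu\otimes\mu_i$ by $d$ is $\lambda$. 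Therefore, using $\mu_0+\mu_1=2\mu$,
\[
\nu_{00}+\nu_{10}=2\lambda=\nu_{11}+\nu_{01}=\nu_{11}+\nu_{10},
\]
since $\nu_{00}+\nu_{10}$ is the pushforward of $(\mu_0+\mu_1)\otimes\mu_0=2\mu\otimes\mu_0$ and $\nu_{11}+\nu_{01}$ that of $2\mu\otimes\mu_1$. Cancelling the finite measure $\nu_{10}$ gives $\nu_{00}=\nu_{11}$, i.e.\ $\P(D_0\in\cdot)=\P(D_1\in\cdot)$.

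The substantive direction is $({\rm Hex}')\Rightarrow({\rm CVC}')$. I would parametrise balanced decompositions by signed measures, writing $(\mu_0,\mu_1)=(\mu+\eta,\mu-\eta)$ as $\eta$ ranges over the finite signed measures with $\eta(\X)=0$ and $-\mu\le\eta\le\mu$. From $\mu_0\otimes\mu_0-\mu_1\otimes\mu_1=2(\mu\otimes\eta+\eta\otimes\mu)$ and symmetry of $d$, condition $({\rm Hex}')$ is equivalent to: the pushforward of $\mu\otimes\eta$ by $d$ is the zero signed measure for every admissible $\eta$; by Fubini this reads $\int_\X\phi_E\,\d\eta=0$ for all Borel $E\subseteq[0,\infty)$, where $\phi_E(y):=\mu\{x:d(x,y)\in E\}$. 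Testing with $\eta=\big(\1_A-\frac{\mu(A)}{\mu(B)}\1_B\big)\mu$ for disjoint Borel sets $A,B$ with $0<\mu(A)\le\mu(B)$ (this $\eta$ is admissible: it has total mass $0$ and density bounded by $1$ in absolute value) forces $\phi_E$ to be $\mu$-a.e.\ constant, for otherwise one could pick $c$ with $\mu\{\phi_E>c\}>0$ and $\mu\{\phi_E<c\}>0$ and take $A,B$ inside these two sets, making $\int\phi_E\,\d\eta$ strictly nonzero. Applying this with $E=[0,q]$, $q\in\Q_{\ge0}$, and intersecting the countably many resulting full-measure sets yields one set $\X'$ of full measure on which $q\mapsto\mu(\mathcal{B}(x,q))$ coincides, for all rational $q$, with a fixed nondecreasing function; right-continuity of $r\mapsto\mu(\mathcal{B}(x,r))$, valid because $\mathcal{B}(x,r)=\bigcap_{s>r}\mathcal{B}(x,s)$ and $\mu$ is finite, extends the equality to all $r\ge0$, which is $({\rm CVC}')$ on $\X'$.

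The main obstacle is this last implication: the perturbations $\eta$ must be chosen within the admissible class $-\mu\le\eta\le\mu$ (so Dirac masses, which would trivialise the argument, are generally unavailable), and one must then pass from ``$\phi_E$ is a.e.\ constant for each fixed $E$'' to ``${\rm CVC}$ holds on a single full-measure set for all radii simultaneously'', which is exactly where the reduction to rational radii and the right-continuity of $r\mapsto\mu(\mathcal{B}(x,r))$ enter. Measurability of $d$, of the sets $\{d(x,y)\in E\}$ for the product $\sigma$-algebra, and the applications of Fubini are handled as in \cite{Andreatta2023}.
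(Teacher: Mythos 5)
Your proposal is correct. The paper only recalls Theorem~\ref{them2} from the companion paper \cite{Andreatta2023} and sketches, for $\Sp^2$, the heuristic ``third sample'' argument; your $({\rm Ind})\Rightarrow({\rm Hex}')$ is exactly that argument made rigorous (adding the cross measure $\nu_{10}=\nu_{01}$ to both $\nu_{00}$ and $\nu_{11}$ via $2\mu\otimes\mu_i=(\mu_0+\mu_1)\otimes\mu_i$ and cancelling a finite measure), and $({\rm CVC}')\Rightarrow({\rm Ind})$ through the conditional law of $D$ given $X=x$ is the intended easy step. The substantive converse $({\rm Hex}')\Rightarrow({\rm CVC}')$, for which the paper gives no argument to compare against, is handled soundly: writing balanced decompositions as $\mu\pm\eta$, reducing $({\rm Hex}')$ to the vanishing of $\int\phi_E\,\d\eta$ for all admissible $\eta$, and testing with $\eta=\bigl(\1_A-\frac{\mu(A)}{\mu(B)}\1_B\bigr)\mu$ does force $\phi_E$ to be $\mu$-a.e.\ constant while correctly staying inside the class $-\mu\le\eta\le\mu$ (Dirac perturbations being unavailable), and the reduction to rational radii plus right-continuity of $r\mapsto\mu(\mathcal{B}(x,r))$ is precisely what is needed to obtain one full-measure set $\X'$ for all radii simultaneously. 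One detail worth making explicit: when both $\{\phi_E>c\}$ and $\{\phi_E<c\}$ have positive measure you should not insist on $\mu(A)\le\mu(B)$ but simply normalise by whichever of the two sets is larger --- the resulting identity ``$\mu$-average of $\phi_E$ over $A$ equals that over $B$'' is symmetric --- since in the presence of atoms a positive-measure set cannot always be shrunk to prescribed measure.
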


\begin{them}[characterization for abstract probability spaces]\label{them3}
Let $(\X,\mathcal{F},\mu)$ be a probability space and $f$ a measurable symmetric function into a measured space $(\mathcal{M},\mathfrak{M})$. The following properties are equivalent:
\begin{enumerate}\setlength{\leftskip}{0.38cm}\itemsep=0pt
\item[$({\rm Ind})$] For any independent random variables $X$ and $Y$ of law $\mu$ and $F=f(X,Y)$, the random variables $X$, $Y$ and $F$ are pairwise independent.
\item[$({\rm Hex}')$] For every balanced decomposition $(\mu_0,\mu_1)$, considering the triples $(X_0,Y_0,F_0)$ and \linebreak $(X_1,Y_1, F_1)$, where for $i=0,1$ the pair $(X_i,Y_i)$ is made of independent random variables of law~$\mu_i$ and $F_i=f(X_i,Y_i)$, we have equality of both distributions, $\P(F_0\in \cdot)=\P(F_1\in \cdot)$ as measures on $\mathcal{M}$.
\item[$({\rm Hex}'')$] For any balanced decompositions $(\mu_0,\mu_1)$ and $(\nu_0,\nu_1)$, where for $i=0,1$, $X_i$ has law $\mu_i$, $Y_i$ has law $\nu_i$ and $F_i=f(X_i,Y_i)$, we have equality of both distributions $\P(F_0\in \cdot)= \P(F_1\in \cdot)$.
\end{enumerate}
Moreover, if $f$ is no longer supposed to be symmetric $({\rm Ind})\Leftrightarrow({\rm Hex}'')$ still holds as well as $({\rm Hex}'')\Rightarrow({\rm Hex}')$.
\end{them}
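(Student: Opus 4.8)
The plan is first to re-encode balanced decompositions as densities. Since $\mu_0+\mu_1=2\mu$ forces $\mu_0,\mu_1\le 2\mu$, both are absolutely continuous with respect to $\mu$; writing $g=\d\mu_0/\d\mu$ we have $0\le g\le 2$, $\int g\,\d\mu=1$ and $\d\mu_1/\d\mu=2-g$, so the balanced decompositions of $\mu$ are precisely the pairs $\big((1+\phi)\mu,(1-\phi)\mu\big)$ with $\phi$ measurable, $|\phi|\le 1$ and $\int\phi\,\d\mu=0$, and conversely every such $\phi$ gives a balanced decomposition. With this dictionary I would run the cycle $({\rm Ind})\Rightarrow({\rm Hex}'')\Rightarrow({\rm Hex}')\Rightarrow({\rm Ind})$, keeping track of the fact that the first two implications and the implication $({\rm Hex}'')\Rightarrow({\rm Ind})$ (the latter needed for the ``moreover'') use nothing about $f$ beyond joint measurability, while only the last arrow $({\rm Hex}')\Rightarrow({\rm Ind})$ truly uses $f(x,y)=f(y,x)$.

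For $({\rm Ind})\Rightarrow({\rm Hex}'')$ I take balanced decompositions $\big((1+\phi)\mu,(1-\phi)\mu\big)$ and $\big((1+\psi)\mu,(1-\psi)\mu\big)$ and write, for $B\in\mathfrak{M}$,
\[
\P(F_i\in B)=\iint\1_B\big(f(x,y)\big)\big(1+(-1)^i\phi(x)\big)\big(1+(-1)^i\psi(y)\big)\,\d\mu(x)\,\d\mu(y).
\]
The cross terms $\phi(x)\psi(y)$ cancel in the difference, leaving
\[
\P(F_0\in B)-\P(F_1\in B)=2\int\phi(x)\,p_B(x)\,\d\mu(x)+2\int\psi(y)\,q_B(y)\,\d\mu(y),
\]
where $p_B(x)=\int\1_B(f(x,y))\,\d\mu(y)$ and $q_B(y)=\int\1_B(f(x,y))\,\d\mu(x)$ are measurable by Tonelli. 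Now $({\rm Ind})$, which says $X\perp F$ and $Y\perp F$ for $X,Y$ independent of law $\mu$ and $F=f(X,Y)$, tested against the bounded $\mu$-centred functions $\phi$ and $\psi$ gives $\int\phi\,p_B\,\d\mu=\big(\int\phi\,\d\mu\big)\P(F\in B)=0$ and $\int\psi\,q_B\,\d\mu=0$, whence $\P(F_0\in B)=\P(F_1\in B)$. Then $({\rm Hex}'')\Rightarrow({\rm Hex}')$ is just the special case $(\nu_0,\nu_1)=(\mu_0,\mu_1)$, and this remains true for non-symmetric $f$.

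For the converses I would use the observation that if $h\in L^\infty(\mu)$ satisfies $\int\phi\,h\,\d\mu=0$ for every measurable $\phi$ with $|\phi|\le1$ and $\int\phi\,\d\mu=0$, then testing against $\phi=\1_C-\mu(C)$ gives $\int_C h\,\d\mu=\mu(C)\int h\,\d\mu$ for all $C$, so $h=\int h\,\d\mu$ $\mu$-a.e. For $({\rm Hex}'')\Rightarrow({\rm Ind})$, apply $({\rm Hex}'')$ with the $Y$-decomposition trivial ($\psi\equiv0$): the difference above is $2\int\phi\,p_B\,\d\mu$, which vanishes for every admissible $\phi$, so $p_B\equiv\P(F\in B)$ a.e., i.e.\ $X\perp F$; taking the $X$-decomposition trivial instead gives $Y\perp F$, and $X\perp Y$ holds by hypothesis. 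For $({\rm Hex}')\Rightarrow({\rm Ind})$ only the diagonal family $\psi=\phi$ is available, so $({\rm Hex}')$ yields $\int\phi\,(p_B+q_B)\,\d\mu=0$; here symmetry forces $q_B=p_B$, hence $\int\phi\,p_B\,\d\mu=0$ and $p_B\equiv\P(F\in B)$ a.e., i.e.\ $X\perp F$, and symmetry again gives $(Y,F)\stackrel{d}{=}(X,F)$, so $Y\perp F$.

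The computations are short, so the ``obstacle'' is really one of bookkeeping: seeing that $({\rm Ind})$ supplies independence of $F$ from \emph{each} argument separately (so both error terms vanish), and pinning down that without symmetry $({\rm Hex}')$ constrains only the symmetrised kernel $p_B+q_B$ and not $p_B$ itself — which is precisely why the full equivalence is stated only for symmetric $f$, with $({\rm Ind})\Leftrightarrow({\rm Hex}'')$ and $({\rm Hex}'')\Rightarrow({\rm Hex}')$ surviving in general. The only other care points, both routine, are the measurability of $p_B,q_B$ and the use of Tonelli to interchange the integrations.
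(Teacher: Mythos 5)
Your proof is correct and complete. Bear in mind that this paper does not itself prove Theorem~\ref{them3}: it only recalls the statement from the companion paper \cite{Andreatta2023} (Theorems 4.2 and 4.5 there) and sketches the forward direction heuristically via the three samples $S_1$, $S_2$, $S_3$; your density computation, in which the balanced decompositions become $\big((1+\phi)\mu,(1-\phi)\mu\big)$ and the cross term $\phi(x)\psi(y)$ cancels in $\P(F_0\in B)-\P(F_1\in B)$, is precisely the measure-theoretic form of that heuristic (Fox's trick of adjoining the cross sample $A\times A^c$ to both sides). The converse directions, which the heuristic does not address, are handled cleanly by your test functions $\phi=\1_C-\mu(C)$, and you correctly isolate symmetry of $f$ as the only ingredient needed for $({\rm Hex}')\Rightarrow({\rm Ind})$ --- consistent with the paper's Table~\ref{fig:tab}, whose right-hand table satisfies $({\rm Hex}')$ but not $({\rm Hex}'')$.
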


The next section is related with the two first generalized hexachordal theorems since it is devoted to the CVC spaces. Section \ref{sec:gene} is about Theorem \ref{them3}. In this case, the ``interval'' func\-tion~$f$ is not necessary a distance but may for instance be $(x,y)\mapsto x^{-1}\cdot y$ in the case of a~group.

\section{Metric probability spaces satisfying (CVC)}\label{sec:examples}

In this section, we provide examples of spaces satisfying the constant volume condition whose definition is recalled in Definition \ref{def:cgc}. Recall that it is a sufficient and almost necessary condition for the hexachordal theorem (Theorems \ref{thm1} and \ref{them2}). In Section \ref{sub:trans}, we report on some examples obtained from the simple fact that transitive spaces satisfies the CVC. See also Remark~\ref{rem:nontran} for the intriguing nontransitive spaces with CVC. In Section \ref{sub:constr}, we report some easy constructions to create new CVC spaces from established CVC spaces.

\subsection{Discrete and continuous CVC spaces}\label{sub:trans}

A classical group-theoretical framework which ensures (CVC) is the one of transitive group actions. We consider the metric measure spaces $(\X,d,\mu)$ such that for every $x$ and $y$ in $\X$ there exists a map $f\colon \X\to \X$ that satisfies
\begin{itemize}\itemsep=0pt
\item $f(x)=y$,
\item $d(f(z),f(z'))=d(z,z')$ for every $z,z'\in \X$, so that $f$ is an isometry,
\item $f_\#\mu=\mu$, where $f_\#\mu:=\mu\big(f^{-1}(\cdot)\big)$ is the law of $f(X)$ if $X$ has law $\mu$.
\end{itemize}
The constant volume condition follows:
\[\rho_{x}(r)=\mu(\mathcal{B}(x,r))=\mu\big(f^{-1}\mathcal{B}(f(x),r)\big)=\mu(\mathcal{B}(f(x),r))=\rho_{y}(r).\]

\begin{defi}[transitive space]
We call the metric measure space $(\X,d,\mu)$ a \emph{transitive space} if the group $G$ of maps $f$ as above (isometries preserving the measure) acts transitively on $\X$, i.e., such that for every $x,y\in \X$ there exists $f\in G$ such that $f(x)=y$.
\end{defi}

Let us first focus on the class of transitive examples satisfying (CVC) in the discrete setting of finite graphs with their counting measure. Note that in this case the condition $f_\#\mu=\mu$ is automatically satisfied because $f$ is one-to-one. Graphs satisfying the two first conditions are usually called (vertex-)transitive graphs and $f$ a graph isomorphism. Many of these graphs are the Cayley graph of a finite group. Recall that if a group $\X$ is generated by a finite system of generators $\Sigma\subseteq \X$, the Cayley graph attached to $(\X,\Sigma)$ is the graph with vertices $\X$ and edges the pairs $(x,y)$ such that $x^{-1}y\in \Sigma$ or $y^{-1}x\in \Sigma$. As usual, we denote this adjacency relation by~$x\sim y$. Let us check that these spaces $\X$ with the counting measure and the path distance are of transitive type and hence satisfy the constant volume condition. Given $x$ and $y$ we choose for $f$ the translation defined by $\tau_{v}\colon z\mapsto v z$, where $v=yx^{-1}$, so that $f(x)=y$. It is an isometry because $\tau_v(z)\sim \tau_v(z')$ if and only if $z\sim z'$, which follows from $\tau_v(z)^{-1}(\tau_v(z'))=z^{-1}z'$. Finally, recall that it preserves the counting measure since it is one-to-one. Basic examples of this type (i.e., finite Cayley graphs) are
\begin{itemize}\itemsep=0pt
\item The symmetric group $\mathfrak{S}(n)$ with, for instance, for $\Sigma$ the set of transpositions.
\item The group $(\Z/n_1\Z)\times \cdots\times (\Z/n_k\Z)$ with $\Sigma=\{(0,\ldots 0,\pm1,0,\ldots,0)\}$. Note that for $n_1=\cdots=n_k=2$, we find the so-called hypercube $\{0,1\}^k$. For $k=1$ and $n_1=12$, we recover the chromatic scale $\Z/12\Z$.
\end{itemize}
Note that a vertex-transitive graph may not be the Cayley graph attached to some $(\X,\Sigma)$, a counterexample being the \emph{Petersen graph} (a famous graph with 10 vertices and 15 edges) another one the graphs made of the edges and vertices of the \emph{dodecahedron}, \emph{icosahedron} and the \emph{truncated icosahedron}

We list now some continuous transitive examples among the most basic:
\begin{itemize}\itemsep=0pt
\item The (hyper)torus $\mathbf{T}^d=\Sp^1\times\cdots\times \Sp^1$ of dimension $d$ with its normalized volume (among other tori).
\item Any sphere $\Sp^d$ or product of spheres with their normalized volumes.
\item %\label{mark:Klein}
The Klein bottle. When the Euclidean space $\R^2$ is made a quotient through the group spanned by the translation $(x,y)\mapsto (x+1,y)$ and the glide reflexion $(x,y)\mapsto (1-x,y+1)$\footnote{The elements of this group are the isometries of the form $(x,y)\mapsto (k\pm x, y+l)$, where $\pm$ is $+$ if and only if $l$ is even.} the translations of $\R^2$ remains isometries that are acting transitively. Topologically the quotient space a Klein bottle with fundamental domain the square $[0,1)\times [0,1)$ (the lower and upper sides are identified after inversion of the orientation). Any Klein bottle of volume 1 obtained in a similar way will be transitive, satisfy the constant volume condition and hence host the hexachordal property. Note that two such Klein bottles are generally not isometric.
\item For more exotic examples, we can think to Albanese tori. Their topology is different from the one of usual tori.
\end{itemize}

\begin{rem}[nontransitive CVC spaces]\label{rem:nontran}
As shown in \cite{AG01,Andreatta2023}, there also exists nontransitive CVC spaces that are graphs. It is unknown whether there exists a nontransitive Riemannian manifold of finite measure with the CVC. We proved in \cite[Section~3]{Andreatta2023} that it does not exist for surfaces. Note also that considering open submanifolds with infinite Riemannian volume and at most one singularity it has been shown in \cite{KP} that the cone $C=\big\{x\in \R^4\colon x_4^2=x_1^2+x_2^2+x_3^2\big\}$ with the \big(Euclidean induced by $\R^4$\big) chord distance $d_{\R^4}$ and the induced volume (of dimension~3)~$\mu$ satisfies CVC. This space $\big(C,d_{\R^4}\,\mu\big)$ is nontransitive: it possesses for instance a singularity at~0. Moreover, the volume of the balls of given radius is the same as for $\R^3$, so that $C$ and~$\R^3$ can be qualified homometric in a general sense. A uniqueness result for $C$ among subsets of Euclidean spaces is also established in \cite{KP}.%See also the lecture notes of de Lellis
\end{rem}

\subsection{Constructions with CVC spaces}\label{sub:constr}

In the following, we denote by $\rho$ the \emph{$($constant$)$ volume function} of a metric space $(\X,d,\mu)$ that satisfies (CVC). It is defined by
\[\rho(r)=\mu(\mathcal{B}(x_0,r)),\]
where $\mathcal{B}(x,r)$ is the closed ball of radius $r\geq 0$ and center $x$. Here $x_0$ is some (or, due to CVC, any) point of $\X$.

In the case of probability spaces, $\rho$ is the cumulative distribution of $D=d(X,Y)$, where $X$ and $Y$ are independent of law $\mu$, i.e.,
\begin{align*}
\rho(r)&=\P(d(X,Y)\leq r) =\iint \1(d(x,y)\leq r)\mu\otimes\mu({\rm d}x,{\rm d}y) \\
 &=\mu^{2}\big(\big\{(x,y)\in \X^2\colon d(x,y)\leq r\big\}\big)
\end{align*}
even though $(\X,d,\mu)$ does not satisfies the CVC. Since cumulative distribution functions characterize the distribution of a random variable, two metric probability spaces are homometric if and only if they have the same volume function. Note, for instance, that Example~3.2 in \cite{Andreatta2023} and~$\Z/7\Z$ with the relation $(x\sim y \Longleftrightarrow y-x\in \{-1,1,-3,3\})$ have the same volume function but are not isometric: the first is nontransitive and the second is transitive.

\begin{ex}[products]\label{ex:products}
Let $(\X_1,d_1,\mu_1)$ and $(\X_2,d_2,\mu_2)$ be two spaces satisfying (CVC) with volume functions $\rho_1$ and $\rho_2$, respectively. Then the product space $\X:=\X_1\times \X_2$ also satisfies it, with the product measure $\mu:=\mu_1\times \mu_2$.
Several choices are possible to combine the distances. We focus on the $\ell^p$ norms of $(d_1,d_2)\in\R^2$, where $p\in[1,\infty]$:
\begin{itemize}\itemsep=0pt
\item[$\ell^\infty$] We can set $d((x_1,y_1),(x_2,y_2))=\max(d_1(x_1,y_1),d_2(x_2,y_2))$. Let us comment on the example of the product of two finite graphs with their path distances. The resulting space is~$\X_1\times \X_2$ with the path distance resulting of the so-called \emph{strong product} of the two graphs. In fact, $d((x_1,y_1),(x_2,y_2))\leq 1$ if and only if $d((x_1,y_1)\leq 1$ and $d((x_2,y_2)\leq 1$. Denoting by $(x_1,y_1)\simeq(x_2,y_2)$ the relation $\{(x_1,y_1)\sim(x_2,y_2)$ or $(x_1,y_1)=(x_2,y_2)\}$, it follows that $(x_1,y_1)\simeq(x_2,y_2)$ if and only if $x_1\simeq x_2$ and $y_1\simeq y_2$. One can check that (CVC) is satisfied for $\rho=\rho_1\times \rho_2$.

\item[$\ell^p$] We can set $d^p((x_1,y_1),(x_2,y_2))=d_1(x_1,y_1)^p+d_2(x_2,y_2)^p$ and obtain for this choice the constant volume function $\rho(r)=\int_{[0,r]}\rho_2\big((r^p-t^p)^{1/p}\big)d\rho_1(t)$.
\item[$\ell^1$] The product of two graphs is the so-called \emph{Cartesian product} for which $(x_1,y_1)\sim(x_2,y_2)$ if and only if ($x_1=x_2$ and $y_1\sim y_2$) or ($y_1=y_2$ and $x_1\sim x_2$).
\item[$\ell^2$] If $\X_1$ and $\X_2$ are isometrically embedded in Euclidean spaces, so is the product with the~$\ell^2$ distance. For instance, the hexachordal phenomenon can be observed on $\Sp^1\times \{0,1\}\subseteq \R^2\times \R=\R^3$.
\end{itemize}
This example inductively extends to product spaces $\X:=\X_1\times\cdots \times\X_n$ and the product measure $\mu_1\otimes\cdots\otimes \mu_n$. This corresponds to the setting of independent random variables ${X=(X_1,\ldots,X_n)}$ and $Y=(Y_1,\ldots,Y_n)$ that we can moreover equip with the Hamming distance $D=d(X,Y)=\sum_{i=1}^n \1(X_i \neq Y_i)\times a_i$ with $a_i>0$, $i=1,\ldots,n$. For an illustration of Theorem~\ref{thm1}, one can, for instance, consider the uniform head and tail space $\X=\{h,t\}^5$ and apply to $A$ ``at least three consecutive tosses are the same'' since it has probability 1/2, as one can check. (Moreover, one can check that $A$ and $A^c$ are generically not isometric.)
\end{ex}

\begin{ex}[union of two spaces with the same constant volume function]\label{ex:twocgc}
We consider for $i=1,2$ two spaces $(\X,d_i,\mu_i)$ satisfying the constant volume condition for the same function $\rho$. We assume moreover that the two spaces are bounded. As noticed in Remark~\ref{rem:nontran}, they can be different. Define $\X$ the disjoint union $\X_1 \sqcup \X_2$ with probability measure $\mu=(1/2)(\mu_1+\mu_2)$ and distance defined by
\[d(x,y)=
\begin{cases}
d_i(x,y) &\text{if }x,y\in \X_i\text{ for some }i,\\
L&\text{otherwise.}
\end{cases}
\]
The constant volume condition and $({\rm Hex})$ are satisfied for any $L\geq 0$. This makes sense even though $d$ is not a distance (see Theorem \ref{them3}) but in order to save the triangle inequality we have to require that for every $i=1,2$ the distance between any two points of $\X_i$ is less than $2L$.
\end{ex}

\begin{ex}[graphs whose points are replaced by metric spaces]\label{ex:graphs}
Let $(G,d_0)$ be a finite graph with the constant volume condition for the counting measure. We scale it so that adjacent points have distance $L$. We replace $G=\bigcup_{i=1}^N\{x_1\}$ by $\X=\bigsqcup_{i=1}^N \X_i$ a family of metric spaces $(\X_i,d_i,\mu_i)$ with diameter smaller than $2L$ and satisfying (CVC) with moreover the same constant volume function, $\rho_1=\cdots=\rho_N$. On $\X$ we set $d(x,y)=d_i(x,y)$ if $x,y\in \X_i$ and $d(x,y)=d_0(x_i,x_j)$ if~$x\in \X_i$, $y\in \X_j$, $i\neq j$. It can be checked that the resulting space satisfies the constant volume condition. A special case is Example~\ref{ex:twocgc}.
\end{ex}

\begin{ex}\label{ex:morecgc}
We have indicated examples of metric probability spaces $(\X,d,\mu)$ satisfying (CVC) such that $D$ is an absolutely continuous or a discrete random variable. With $\{0,1\}\times \Sp^1$ in Example \ref{ex:products} if one takes $\ell_\infty$ and the Examples \ref{ex:twocgc} and \ref{ex:graphs} we see the possibility for $D$ to have both a~nontrivial atomic and absolutely continuous part. We introduce now the situation of a~space with the (CVC) such that $D$ is diffuse but not absolutely continuous. Precisely its law is the Cantor law and its cumulative distribution function $\rho$ is the Devil's staircase. For $\X$ we take the sequences $x=(x_1, x_2,\ldots)$ with $x_i\in \{0,1\}$ for every $i\geq1$. The measure is the one of head/tail model, i.e., we weight each digit with 1/2 independently. For the distance between~$x$ and $y$, we set $d(x,y)=\sum_{i=1}^\infty |x_i-y_i|(2/3^i)$. Note that it corresponds to a $\ell^1$ distance on an infinite product $\{0,1\}^{\N^*}$ weighted by a scaling factor $(2/3^i)$ on the $i$-th coordinate.
\end{ex}

\section{Remarks on the theorems}\label{sec:gene}

\subsection[Remark on weakening mu(A)=1/2 in Theorem 2.2]{Remark on weakening $\boldsymbol{\mu(A)=1/2}$ in Theorem \ref{thm1}}

Generalized versions of Babbitt's hexachordal theorem have been given, where the subset of interest and its complementary do not have the same size~\cite{mccartin2016}.
For Theorem \ref{thm1}, this translates into $A$ not having $\mu$-measure $1/2$ (and hence neither has $A^c$).
The principle of considering~$A\times A^c$ is still useful, but does not simplify as well in this more general case.
Namely, for any $r\in[0,+\infty]$, we have
\begin{equation*}
 \mu^{2}\big\{(x,y)\in A^2\colon d(x,y)\leq r \big\} - \mu^{2}\big\{(x,y)\in (A^c)^2\colon d(x,y)\leq r \big\} = \rho(r)[\mu(A)-\mu(A^c)].
\end{equation*}
However, as the two sets $A^2$ and $(A^c)^2$ have different measures, this generalization does not carry over neatly to the probabilistic formulations of Theorems~\ref{them2} and~\ref{them3}. Finally, note that~$\{d(x,y)\leq r\}$ in the left-hand side can be replaced by $\{d(x,y)\in E\}$ if one also replaces~$\rho(r)$ by $\int_{0}^{\infty} \1_E(r) {\rm d}\rho(r)$ in the right-hand side.

\subsection[Remarks on (Ind) in Theorem 2.4]{Remarks on $\boldsymbol{({\rm Ind})}$ in Theorem \ref{them3}}
Whereas in Theorems \ref{thm1} and \ref{them2} the law of $(X,D)=(X,d(X,Y))$ equals the law of $(Y,D)=(Y,d(X,Y))$ and $X$ and $D$ are independent if and only if $Y$ and $D$ are, the fact that in Theorem~\ref{them3} the function $f$ does not have to be symmetric (see the two last lines) makes that $({\rm Ind})$ ``$X$, $Y$ and $F=f(X,Y)$ are pairwise independent'' is different from ``$X$ and $F$ are independent''. In this paragraph we comment on this difference.

To illustrate that the pairwise independence $({\rm Ind})$ is necessary we consider $f(x,y)=y$ and observe Theorem~\ref{them3} does not apply properly. If $X$ and $Y$ are independent, so are $X$ and~${F=f(X,Y)=Y}$. It is clear that $Y$ and $F=Y$ are not independent so that $({\rm Ind})$ is not satisfied. Moreover, for $X_0$ and $Y_0$ independent of law $\mu_0$ the law of $F_0$ is $\mu_0$ and for $X_1$ and~$Y_1$ independent of law $\mu_1$ the law of $F_1=f(X_1,Y_1)$ is $\mu_1\neq \mu_0$. Therefore, $({\rm Hex}')$ is false and $({\rm Hex}'')$ is also false for $(\nu_0,\nu_1)=(\mu_0,\mu_1)$.

We continue with a corollary of the second case of Theorem~\ref{them3} (not necessary symmetric functions, see the last two lines). It applies, in particular, to topological groups with a left-invariant probability measure through the function $f(x,y)=x^{-1}\cdot y$, which is important with respect to the existing literature. See \cite[Corollary 4.6]{Andreatta2023} for a direct proof.

\begin{cor}[weakening $({\rm Ind})$ in Theorem \ref{them3} for antisymmetric functions $f$]\label{cor:anti}%\label{ex:anti}
Let $(\X,\mathcal{F},\mu)$ be a probability space and $f$ a measurable function defined on $\X\times \X$ with values in a measurable space $(\M,\mathfrak{M})$. Let us assume that $f$ is antisymmetric in the sense there exists a measurable involution $i$ on $\M$, i.e., a function $i\colon \M\to \M$ such that $i\circ i(m)=m$, for every $m\in \M$ with $f(x,y)=i\circ f(y,x)$.

Let $X$ and $Y$ be independent random variables of law $\mu$. If $F=f(X,Y)$ and $Y$ are independent, then $X$ and $F$ are also independent so that $({\rm Ind})$ is satisfied and $({\rm Ind})\Rightarrow({\rm Hex}')$ and $({\rm Ind})\Leftrightarrow({\rm Hex}'')$ apply in Theorem {\rm \ref{them3}}.

Similarly if $X$ and $Y$ as well as $X$ and $F$ are independent so are $F$ and $Y$ and $({\rm Ind})$ is satisfied.
\end{cor}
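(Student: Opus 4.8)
The plan is to exploit two symmetries simultaneously: the exchangeability of $(X,Y)$ — they are i.i.d.\ of law $\mu$ — and the prescribed antisymmetry of $f$ through the involution $i$. Everything rests on the single distributional identity
\begin{equation*}
(X,\,Y,\,F)\ \stackrel{d}{=}\ (Y,\,X,\,i(F))\qquad\text{in }\X\times\X\times\M .
\end{equation*}
To get it, let $\Phi\colon\X\times\X\to\X\times\X\times\M$, $\Phi(x,y)=(x,y,f(x,y))$, which is measurable since $f$ is. Because $(X,Y)$ and the swapped pair $(Y,X)$ have the same law, $\Phi(X,Y)$ and $\Phi(Y,X)$ have the same law; and $\Phi(Y,X)=(Y,X,f(Y,X))=(Y,X,i(F))$, using that $f(y,x)=i\circ f(x,y)$ (which follows from $f(x,y)=i\circ f(y,x)$ and $i\circ i=\id$).

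For the first assertion, project the identity onto the first and third coordinates to get $(X,F)\stackrel{d}{=}(Y,i(F))$. By hypothesis $Y$ and $F$ are independent, hence so are $Y$ and $i(F)$ (independence is stable under the measurable map $i$), so $\Law(Y,i(F))=\Law(Y)\otimes\Law(i(F))$ is a product measure; transporting this along the distributional identity, $\Law(X,F)$ is a product measure too, and reading off its marginals forces $\Law(X,F)=\Law(X)\otimes\Law(F)$. Thus $X$ and $F$ are independent. Together with the standing independence of $X,Y$ and the hypothesis on $Y,F$, this is pairwise independence, so $({\rm Ind})$ holds and the implications $({\rm Ind})\Rightarrow({\rm Hex}')$, $({\rm Ind})\Leftrightarrow({\rm Hex}'')$ of Theorem~\ref{them3} apply.

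The last assertion is the mirror image: projecting the identity onto the second and third coordinates gives $(Y,F)\stackrel{d}{=}(X,i(F))$; the hypothesis that $X$ and $F$ are independent makes $(X,i(F))$ — hence $(Y,F)$ — a product measure, and comparing marginals yields $\Law(Y,F)=\Law(Y)\otimes\Law(F)$, i.e.\ $Y$ and $F$ independent, so $({\rm Ind})$ holds.

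There is no real obstacle here: the whole content is the observation that exchangeability of $(X,Y)$ plus antisymmetry of $f$ produces the identity $(X,Y,F)\stackrel{d}{=}(Y,X,i(F))$. The only points needing mild care are keeping track of the involution when passing from $f(x,y)$ to $f(y,x)$, and the standard principle that a joint law already known to be \emph{some} product measure must equal the product of its own marginals.
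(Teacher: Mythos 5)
Your proof is correct and follows essentially the same route as the paper's: both rest on the exchangeability of $(X,Y)$ combined with the involution $i$ to transport the known independence (of $Y$ and $i(F)=f(Y,X)$, resp.\ of $X$ and $i(F)$) to the pair $(X,F)$, resp.\ $(Y,F)$. Your version merely makes explicit the distributional identity $(X,Y,F)\stackrel{d}{=}(Y,X,i(F))$ and the step from ``the joint law is some product measure'' to ``it is the product of its own marginals'', which the paper leaves implicit.
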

\begin{proof}
Let us assume that $(X,Y)$ and $(Y,F)$ are pairs of independent random variables. Thus~$Y$ and $i(F))=i\circ f(X,Y)=f(Y,X))$ are independent random variables. But the law of $(Y,f(Y,X))$ is the law of $(X,f(X,Y))$. Therefore, $X$ and $F$ are independent. The same argument to prove $({\rm Ind})$ also works if $X$ and $F$ are independent.
\end{proof}

We now concretely illustrate Theorem \ref{them3} and Corollary \ref{cor:anti} with $f(x,y)=x^{-1}\cdot y$ in the special case of some $A$ of cardinality 6 in the group $\Z/3\Z\times \Z/4\Z$ of cardinality 12. We also give the invariant distribution of the distance as an illustration of Theorems \ref{thm1} and \ref{them2}.

\begin{ex}
We consider $G=\Z/3Z\times \Z/4\Z$ with $f\colon (x,y)\mapsto x^{-1}y$. For
\[A=\{ (1,0),(1,2),(2,0),(2,1),(2,2),(2,3)\}\]
and its complementary set, the random variables $F_i$, $i=0,1$ are distributed as follows:
\begin{center}\renewcommand{\arraystretch}{1.2}
\begin{tabular}{|c|c|c|c|c|c|}
\hline
$v$ &$(0, 0)$&(0,2) & $(0,1)$ and $(0,3)$&$(1,0)$ and $(2,0)$ \\
\hline
$\P(F_i=v)$ & $6/36$ & 6/36& 4/36& 2/36 \\
\hline
$v$ & \multicolumn{2}{c|}{$(1,1)$ and $(2,3)$}& $(1,2)$ and $(2,2)$& $(1,3)$ and $(2,1)$ \\
\hline
$\P(F_i=v)$ & \multicolumn{2}{c|}{3/36} & 2/36& 2/36 \\
\hline
\end{tabular}.
\end{center}
For the set of generators $\{(\pm 1,0),(0,\pm 1)\}$, this corresponds to the following distribution of the distances $D_i$, $i=0,1$:
\begin{center}\renewcommand{\arraystretch}{1.2}
\begin{tabular}{|c|c|c|c|c|}
\hline
$r$ & $0$ & $1$ & $2$& $3$\\
\hline
$\P(D_i=r)$ & $6/36$ & 12/36& 10/36& 8/36\\
\hline
\end{tabular}.
\end{center}
\end{ex}

Beside $f(x,y)=x^{-1}\cdot y$ the most popular example of function for which Theorem \ref{them3} applies is probably $f(x,y)=x\cdot y$ in the case of groups with a left-invariant probability measure. This was explained in \cite[Corollary~4.6]{Andreatta2023}. Here we state it again for finite groups (where the uniform measure is invariant) since it will be useful in the following subsections.

\begin{ex}[Caley tables]\label{ex:cayley}
Observe that $({\rm Ind})$ in Theorem \ref{them3} is satisfied for finite groups~$(\!G,\!\cdot)$ with their uniform measure $\mu$ and the function $f(x,y)=x\cdot y$. In fact, for $X$, $Y$ independent of law $\mu$ and every $(x,y)\in G^2$,
\begin{align*}
\P(X=x,\, F=y)=\P\big(X=x, Y=x^{-1}\!\cdot y\big)=\P(X=x)\P\big(Y=x^{-1}\!\cdot y\big)=\P(X=x)\!\times\! \frac1{\#G}.
\end{align*}
Therefore, $F$ is uniform on $G$ and independent from $X$. One proves in the same way that $F$ is independent from $Y$.

In fact, the value of $(X,Y,F)$ corresponds to a uniform intersection among $(\#G)^2$ possibilities in the Cayley tables, i.e., the multiplication table for the group $G$, where $X$ is the row and $Y$ the column. Therefore, this type of tables permits one to visualize the hexachordal property. See, e.g., \cite{fox1964,wilcox1982} where this fact is commented.
\end{ex}

\subsection[Remark on (Ind) Rightarrow (Hex') in Theorem 2.4]{Remark on $\boldsymbol{({\rm Ind})\Rightarrow({\rm Hex}')}$ in Theorem \ref{them3}}

Let us show that in Theorem \ref{them3} the implication $({\rm Hex}')\Rightarrow({\rm Ind})$ is false. To see this, let~$\X$ be the space $\{\star,\#,\S,\bullet\}$ equipped with the uniform measure $\mu$. Let the values $f(X,Y)$ of a~nonsymmetric function $f$ be given in Table~\ref{fig:tab}, where $X$ is the uniform choice of a row, $Y$ of a column and $F$ is the intersection. Our example is the right table of Table~\ref{fig:tab}, the left and the middle being parts of the explanation. Observe that the law of $F=f(X,Y)$ conditioned on the choice of a~row (the value of $X$) is not constant. Hence, $({\rm Ind})$ is not satisfied. The left table corresponds to a Cayley table as in Example~\ref{ex:cayley} so that $({\rm Hex}'')$ is satisfies. In the middle table below, we have swapped the two last rows. We can notice that the function is~no~longer symmetric. However, we stress that $({\rm Ind})$, $({\rm Hex}'')$ and $({\rm Hex}')$ remain. In $({\rm Hex}')$, given a~balanced decomposition~$(\mu_0,\mu_1)$ we have for $\P((X_i,Y_i)=(a,b))=\P((X_i,Y_i)=(b,a))$ for any pair $(a,b)$ and $i=0,1$. Therefore, we conserve $({\rm Hex}')$ if we swap the values of $f(a,b)$ and~$f(b,a)$. This is what we did for $(\S,\star)$ between the middle and the right table.
As commented above $({\rm Ind})$ (and $({\rm Hex}'')$) are no longer true after this operation.
\begin{center}

\begin{table}[h!]\centering
\begin{tabular}{c|cccc}
&$\star$&\#&\S&$\bullet$\\
\hline
$\star$&0&1&2&3\\
\#&1&2&3&0\\
\S&2&3&0&1\\
$\bullet$&3&0&1&2
\end{tabular}
\qquad
\begin{tabular}{c|cccc}
&$\star$&\#&\S&$\bullet$\\
\hline
$\star$&0&1&2&3\\
\#&1&2&3&0\\
\S&3&0&1&2\\
$\bullet$&2&3&0&1
\end{tabular}
\qquad
\begin{tabular}{c|cccc}
&$\star$&\#&\S&$\bullet$\\
\hline
$\star$&0&1&3&3\\
\#&1&2&3&0\\
\S&2&0&1&2\\
$\bullet$&2&3&0&1
\end{tabular}
\caption{The left and middle functions satisfy $({\rm Hex}')$ and $({\rm Hex}'')$. The right one satisfies $({\rm Hex}')$ but not $({\rm Hex}'')$.}\label{fig:tab}
\end{table}
\end{center}

\subsection{Nontransitive example where Theorem \ref{them3} applies}

In the metric setting, the constant volume condition is satisfied by many transitive metric probability spaces for which simple formulas exist. However, as recalled in Remark \ref{rem:nontran} (CVC) is not equivalent to being transitive. Although it is known since at least 1964 \cite{fox1964}, we would like to stress that exactly the same happens in the symbolic setting of Theorem \ref{them3}: not all spaces~$(\X,f,\mu)$ with (Ind) are isomorph to groups with the function $f(x,y)=x\cdot y$ or $x^{-1}\cdot y$. To take the Cayley table of a group as in Example \ref{ex:cayley} is only one way to have $X$, $Y$ and $F$ pairwise independent. In the discrete setting, it was already mentioned, for instance, in \cite{fox1964} that any Latin square gives rise to the hexachordal theorem. In Table~\ref{fig:tab3}, we represented such a Latin square with six symbols. `Latin' stands for the fact that each symbol appears once and only once on each row and column. A~rather simple argument in the theory of Latin squares/quasigroups tells that if the following table were a Cayley table it would also be the Cayley table of a group, where the neutral element would be both left and up in the upper and left header, respectively. Here it means that $\heartsuit$ could be considered as the neutral and the elements on the first row and column are the ones of the two headers. Notice that they appear in the same order: $\heartsuit$, $\Box$, $\triangle$, $\clubsuit$, $\diamondsuit$, $\spadesuit$. Thus the elements on the diagonal should be the squares. Since on the diagonal $\heartsuit$ only appears once, the group can neither be $\Z/6\Z$, $(\Z/2\Z\times \Z/3\Z)$ nor $\mathfrak{S}(3)$ for which there are at least two elements of order 2. But these groups precisely constitute the list of all groups of cardinality 6.

\begin{table}[h!]
\begin{center}
\begin{tabular}{|cccccc}
\hline
$\heartsuit$&$\Box$&$\triangle$&$\clubsuit$&$\diamondsuit$&$\spadesuit$\\
$\Box$&$\triangle$&$\diamondsuit$&$\spadesuit$&$\heartsuit$&$\clubsuit$\\
$\triangle$&$\clubsuit$&$\Box$&$\diamondsuit$&$\spadesuit$&$\heartsuit$\\
$\clubsuit$&$\spadesuit$&$\heartsuit$&$\Box$&$\triangle$&$\diamondsuit$\\
$\diamondsuit$&$\heartsuit$&$\spadesuit$&$\triangle$&$\clubsuit$&$\Box$\\
$\spadesuit$&$\diamondsuit$&$\clubsuit$&$\heartsuit$&$\Box$&$\triangle$\\
\end{tabular}
\caption{A Latin square that is not a Cayley table.}\label{fig:tab3}
\end{center}
\end{table}

\subsection{Remark on the Patterson function}\label{sub:pat}

A popular setting for Theorem \ref{them3} is the one of separable groups with a left invariant probability measure $\mu$. Recall, in particular, \cite[Corollary~4.6]{Andreatta2023} and Example~\ref{ex:cayley}. In this setting, some authors introduce the Patterson function of $A\subset \X$ defined by $\operatorname{Pat}_A\colon g\in \X \mapsto \mu(A\cap g\cdot A)$ and reformulate Babbitt's theorem as the functional equality $\operatorname{Pat}_A=\operatorname{Pat}_{A^c}$ that has to be satisfied for every $A$ of measure $1/2$. Let us explain that this is an equality of densities (with respect to~~$\mu$) that corresponds to our equality of measures $({\rm Hex})$ adapted to the case of general functions $f$. Note first that our formulation with measures is justified by the fact that in general $F=f(X,Y)$ does not possess a density with respect to $\mu$ (or to the Lebesgue measure, see Example~\ref{ex:morecgc}). Assume $F=X^{-1}\cdot Y$ and that the three components of $(X,Y,F)$ are pairwise independent of law $\mu$. We have the following rewriting of $({\rm Hex})$ for groups:
\begin{align}\label{eq:lr}
\P(F\in E \mid X\in A\text{ and }Y\in A)=\P(F\in E \mid X\in A^c\text{ and }Y\in A^c).
\end{align}
The left-hand side also writes $4\P(F\in E\text{ and }X\in A\text{ and }X\cdot F\in A)$. Since $F$ and $X$ are independent, this is
\begin{align*}
4\int_E\P(X\in A\text{ and }X\cdot g\in A) {\rm d}\mu(g)&=\int_E 4\operatorname{Pat}_A\big(g^{-1}\big) {\rm d}\mu(g).
\end{align*}
Using the left invariance of $\mu$ in the definition of $\operatorname{Pat}_A$, we get $\operatorname{Pat}_A\big(g^{-1}\big)=\operatorname{Pat}_A(g)$ and finally see that the law of $F$ conditional upon $(X\in A\text{ and }Y\in A)$ admits the density $4\operatorname{Pat}_A$ with respect to $\mu$. One can proceed identically for the right-hand side of \eqref{eq:lr} so that for every $E$ measurable
\[\P(F\in E \mid X\in A^c\text{ and }Y\in A^c)=\int_E 4\operatorname{Pat}_{A^c}(g) {\rm d}\mu(g).\]
 From \eqref{eq:lr}, it follows the equality of the two Patterson functions at almost every $g\in \X$. As proved in \cite{ballinger2009,buerger76,mandereau2011a,wilcox1982} in the adequate setting this identity in fact holds not only for almost every but for every $g\in \X$.

\subsection*{Acknowledgements}

The authors wish to thank the editors of this special issue for their invitation to celebrate Jean-Pierre Bourguignon's 75th birthday through a paper centered around some aspects of contemporary `mathemusical' research. We are particularly grateful to the anonymous referees for their careful reading of the manuscript and their valuable criticisms that helped us to improve many aspects of the present article. Many thanks to the colleagues at IRMA-University of Strasbourg and IRCAM-Sorbonne University for all the discussions that contributed to clarifying some of the ideas developed in this article. This research is partially supported by the Interdisciplinary Thematic Institute CREAA, as part of the ITI 2021-2028 program of the Universit\'e de Strasbourg, the CNRS, and the Inserm (funded by IdEx Unistra ANR-10-IDEX-0002, and by SFRI-STRAT'US ANR-20-SFRI-0012 under the French Investments for the Future Program).

%\cite{APY11,amiot2007,gromov1981,rosenblatt84,sturm2020} !!Not cited

\pdfbookmark[1]{References}{ref}
\LastPageEnding

\end{document}